\newtheorem{theorem}{Theorem}[section]
\newtheorem{lemma}{Lemma}[section]
\newtheorem{corollary}{Corollary}[section]
\newtheorem{proposition}{Proposition}
\begin{document}
\title{A new labeling construction from the $\otimes_h$-product}
\author{S. C. L\'opez}
\address{%
Departament de Matem\`{a}tiques\\
Universitat Polit\`{e}cnica de Catalunya\\
C/Esteve Terrades 5\\
08860 Castelldefels, Spain}
\email{susana.clara.lopez@upc.edu}

\author{F. A. Muntaner-Batle}
\address{Graph Theory and Applications Research Group \\
 School of Electrical Engineering and Computer Science\\
Faculty of Engineering and Built Environment\\
The University of Newcastle\\
NSW 2308
Australia}
\email{famb1es@yahoo.es}

\author{M. Prabu}
\address{British University Vietnam\\
Hanoi, Vietnam}
\email{mprabu201@gmail.com}

\maketitle

\begin{abstract}
The $\otimes_h$-product that refers the title was introduced in 2008 as a generalization of the Kronecker product of digraphs. Many relations among labelings have been obtained since then, always using as a second factor a family of super edge-magic graphs with equal order and size. In this paper, we introduce a new labeling construction by changing the role of the factors. Using this new construction the range of applications grows up considerably. In particular, we can increase the information about magic sums of cycles and crowns.
\end{abstract}

\begin{quotation}
\noindent{\bf Key Words}: {Edge-magic, super edge-magic, $\otimes_h$-product, magic sum}

\noindent{\bf 2010 Mathematics Subject Classification}:  Primary 05C78,
   Se\-con\-dary       05C76
\end{quotation}
\section{Introduction}

For the graph theory terminology and notation not defined in this paper we refer the reader to either one of the following sources \cite{BaMi,CH,G,Wa}.
However, in order to make this paper reasonably self-contained, we mention that by a $(p,q)$-graph we mean a graph of order $p$ and size $q$. For integers $m\le n$, we denote by $[m,n]$ the set $\{m,m+1,\ldots,n\}$.
In 1970, Kotzig and Rosa \cite{K1} introduced the concepts of edge-magic graphs and edge-magic labelings as follows: Let $G$ be a $(p,q)$-graph. Then $G$ is called {\it edge-magic} if there is a bijective function $f:V(G)\cup E(G)\rightarrow [1,p+q]$ such that the sum $f(x)+f(xy)+f(y)=k$ for any $xy\in E(G)$. Such a function is called an {\it edge-magic labeling} of $G$ and $k$ is called the {\it valence} \cite{K1} or the {\it magic sum} \cite{Wa} of the labeling $f$. We write $val(f)$ to denote the valence of $f$.

Motivated by the concept of edge-magic labelings, Enomoto et al. \cite{E} introduced in 1998 the concepts of super edge-magic graphs and labelings as follows: Let $f:V(G)\cup E(G)\rightarrow [1,p+q]$ be an edge-magic labeling of a $(p,q)$-graph G with the extra property that $f(v)=[1,p].$ Then G is called { \it super edge-magic} and $f$ is a {\it super edge-magic labeling} of $G$. It is worthwhile mentioning that Acharya and Hegde had already defined in \cite{AH} the concept of strongly indexable graph that turns out to be equivalent to the concept of super edge-magic graph. Although the original definitions of (super) edge-magic graphs and labelings were originally provided for simple graphs, in this paper, we understand these definitions for any graph without multiple edges. Therefore, unless otherwise specified, the graphs considered in this paper are not necessarily simple. In \cite{F2}, Figueroa-Centeno et al. provided the following useful characterization of super edge-magic simple graphs, that works in exactly the same way for graphs in general.

\begin{lemma}\label{super_consecutive} \cite{F2}
Let $G$ be a $(p,q)$-graph. Then $G$ is super edge-magic if and only if  there is a
bijective function $g:V(G)\longrightarrow [1,p]$ such
that the set $S=\{g(u)+g(v):uv\in E(G)\}$ is a set of $q$
consecutive integers.
In this case, $g$ can be extended to a super edge-magic labeling $f$ with valence $p+q+\min S$.
\end{lemma}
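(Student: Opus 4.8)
The plan is to prove the two implications separately, in each case exploiting the linear relation $f(u)+f(uv)+f(v)=val(f)$ to pass back and forth between the edge labels and the vertex sums $g(u)+g(v)$.

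For the forward direction I would start from a super edge-magic labeling $f$ with valence $k$ and set $g:=f|_{V(G)}$, which is a bijection onto $[1,p]$ by the definition of super edge-magic. Solving the valence equation for the edge label gives $f(uv)=k-\bigl(g(u)+g(v)\bigr)$. Since $f$ restricts to a bijection from $E(G)$ onto $[p+1,p+q]$, the quantity $k-\bigl(g(u)+g(v)\bigr)$ runs over the $q$ consecutive integers $p+1,\dots,p+q$; hence $g(u)+g(v)$ runs over the $q$ consecutive integers $k-(p+q),\dots,k-(p+1)$, so $S$ is indeed a block of $q$ consecutive integers. This direction is essentially a change of variables and carries no real difficulty.

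For the converse, suppose $g:V(G)\to[1,p]$ is a bijection with $S=\{m,m+1,\dots,m+q-1\}$, where $m=\min S$. The key observation is that $|S|=q$ forces the map $uv\mapsto g(u)+g(v)$ to be a bijection from the $q$ edges onto $S$: there are exactly $q$ distinct sum-values available for $q$ edges, so no two edges can share a sum. I would then define $f$ to agree with $g$ on the vertices and set $f(uv):=k-\bigl(g(u)+g(v)\bigr)$, choosing the constant $k:=p+q+m$ precisely so that the edge labels fill the block $[p+1,p+q]$.

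The remaining verification is then routine. The restriction of $f$ to vertices is a bijection onto $[1,p]$; its restriction to edges is a bijection onto $[p+1,p+q]$, since $g(u)+g(v)$ bijects onto $\{m,\dots,m+q-1\}$ and $f(uv)=p+q+m-(g(u)+g(v))$ reverses this block onto $\{p+1,\dots,p+q\}$; the two target blocks are disjoint and together tile $[1,p+q]$, so $f:V(G)\cup E(G)\to[1,p+q]$ is a bijection; and the valence equation holds by construction with $val(f)=k=p+q+\min S$, which simultaneously establishes the claimed valence formula. The only point demanding care—rather than a genuine obstacle—is justifying that the edge-sum map is injective so that the edge labels do not collide, and this is exactly where the hypothesis that $S$ consists of $q$ consecutive (hence $q$ distinct) integers is used.
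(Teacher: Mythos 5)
Your argument is correct and is precisely the standard proof of this characterization: the paper itself does not reprove the lemma (it is quoted from \cite{F2}), and your two directions---restricting $f$ to the vertices and solving the valence equation one way, then defining $f(uv)=p+q+\min S-(g(u)+g(v))$ the other way---are exactly the argument in that reference. The one point needing care, that the edge-sum map is injective because a surjection from $q$ edges onto the $q$-element set $S$ must be a bijection, is handled correctly.
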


Let $f$ be an edge-magic labeling of a $(p,q)$-graph $G$. The {\it complementary labeling} of $f$, denoted by $\overline{f}$, is the labeling defined by the rule: $\overline{f}(x)=p+q+1-f(x)$, for all $x\in V(G)\cup E(G)$. Notice that, if $f$ is an edge-magic labeling of $G$, we have that $\overline{f}$ is also an edge-magic labeling of $G$ with valence $\hbox{val}(\overline{f}) = 3(p+q+1)-\hbox{val}(f)$. In the case of a super edge-magic labeling $f$ of a graph $G$, there is also the corresponding \textit{super edge-magic complementary labeling}, $f^{c}$, which is also super edge-magic. In this case $f^{c}$ is defined by the rule
\begin{eqnarray}
  f^{c}(x) &=& p+1-f(x), \ \forall x \in V(G),
  \end{eqnarray}
  $f^{c} (ab)$ is obtained as described in Lemma \ref{super_consecutive}, for all $ab \in E(G)$.
  Then, the valence of $f^{c}$ can be expressed in terms of the valence of $f$ as follows:
  $\hbox{val}(f^{c})=4p+q+3-\hbox{val}(f)$.
  Also, in the case that $G$ is a graph of equal order and size, new edge-magic labelings can be obtained from known
  super edge-magic labelings of $G$.
  Such labelings are known as the even and odd edge-magic labelings of $G$. The \textit{odd labeling} and the \textit{even labeling} \cite{PEM_LMR} obtained from $f$, denoted respectively by $o(f)$ and $e(f)$, are the labelings $o(f), e(f):V(G)\cup E(G)\rightarrow [1,p+q]$ defined as follows: (i) on the vertices: $o(f)(x)=2f(x)-1$ and $e(f)(x)=2f(x)$, for all $x \in V(G)$, (ii) on the edges: $o(f)(xy) = 2 \hbox{val}(f)-2p-2-o(f)(x)-o(f)(y)$ and $e(f)(xy)=2 \hbox{val}(f)-2p-1-e(f)(x)-e(f)(y)$, for all $xy \in E(G)$. The next lemma is an easy exercice.

\begin{lemma}\label{lemma: overline even=odd complementary}
Let $f$ be a super edge-magic labeling of a $(p,q)$-graph $G$. Then,
$$\overline{e(f)}= o(f_c)\hspace{0.5cm} \hbox{and} \hspace{0.5cm} \overline{o(f)}= e(f_c).$$

\end{lemma}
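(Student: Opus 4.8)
The plan is to prove each identity by showing that the two labelings agree pointwise on $V(G)\cup E(G)$, checking vertices and edges separately; equality of functions then yields the result. Two facts are used throughout. First, the even and odd labelings are defined only when $G$ has equal order and size, so $q=p$ and every label lies in $[1,2p]$. Second, the recorded valence of the super edge-magic complementary labeling $f^{c}$ (written $f_c$ in the statement) is $\mathrm{val}(f^{c})=4p+q+3-\mathrm{val}(f)=5p+3-\mathrm{val}(f)$, which I would invoke when handling the edges.

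First I would check the vertices, where everything is immediate from the definitions. For the first identity, $\overline{e(f)}(x)=(p+q+1)-e(f)(x)=2p+1-2f(x)$, while $o(f^{c})(x)=2f^{c}(x)-1=2(p+1-f(x))-1=2p+1-2f(x)$; these coincide. Symmetrically, $\overline{o(f)}(x)=2p+1-(2f(x)-1)=2p+2-2f(x)=2f^{c}(x)=e(f^{c})(x)$, which settles the vertex part of the second identity.

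The edge case is the only place requiring any care, and it is precisely where the valence formula enters. Expanding the edge rule for $e(f)$ gives $\overline{e(f)}(xy)=4p+2-2\,\mathrm{val}(f)+2f(x)+2f(y)$; applying the edge rule for $o(f^{c})$ together with the vertex values above produces $o(f^{c})(xy)=2\,\mathrm{val}(f^{c})-6p-4+2f(x)+2f(y)$, and substituting $\mathrm{val}(f^{c})=5p+3-\mathrm{val}(f)$ collapses this to the same expression, establishing $\overline{e(f)}=o(f^{c})$. The second identity is identical in structure, both edge expressions reducing to the common value $4p+1-2\,\mathrm{val}(f)+2f(x)+2f(y)$.

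I do not expect a genuine obstacle here: this is exactly the promised easy exercise, and the entire argument is bookkeeping of additive constants plus one substitution of $\mathrm{val}(f^{c})$. The only thing to guard against is an arithmetic slip in those constants, so I would double-check the computation by confirming that it is precisely the substitution of the valence of $f^{c}$ that forces the two edge formulas to align.
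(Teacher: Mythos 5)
Your proof is correct, and since the paper dismisses this lemma as "an easy exercice" without giving an argument, your direct pointwise verification (vertices immediately, edges via the substitution $\mathrm{val}(f^{c})=5p+3-\mathrm{val}(f)$ after setting $q=p$) is exactly the intended computation; I checked the constants and both edge expressions do reduce to $4p+2-2\,\mathrm{val}(f)+2f(x)+2f(y)$ and $4p+1-2\,\mathrm{val}(f)+2f(x)+2f(y)$ respectively. Nothing further is needed.
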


In \cite{F1}, Figueroa et al. defined the following product: Let $D$ be a digraph and let $\Gamma$ be a family of digraphs with the same set $V$ of vertices. Assume that $h: E(D) \to \Gamma$ is any function that assigns elements of $\Gamma$ to the arcs of $D$. Then the digraph $D \otimes _{h} \Gamma $ is defined by (i) $V(D \otimes _{h} \Gamma)= V(D) \times V$ and (ii) $((a,i),(b,j)) \in E(D \otimes _{h} \Gamma) \Leftrightarrow (a,b) \in E(D)$ and $(i,j) \in E(h(a,b))$. Note that when $h$ is constant, $D \otimes _{h} \Gamma$ is the Kronecker product.

Many relations among labelings have been established using the $\otimes_h$-product and some particular families of graphs, namely $\mathcal{S}_p$ and $\mathcal{S}_p^k$ (see for instance, \cite{ILMR,LopMunRiu1,LopMunRiu6,PEM_LMR2}).
The family $\mathcal{S}_p$ contains all super edge-magic $1$-regular labeled digraphs of order $p$ where each vertex takes the name of the label that has been assigned to it. A super edge-magic digraph $F$ is in $\mathcal{S}_p^k$ if $|V(F)|= |E(F)|=p$ and the minimum sum of the labels of the adjacent vertices is equal to $k$ (see Lemma \ref{super_consecutive}). Notice that, since each $1$-regular digraph has minimum induced sum equal to $(p+3)/2$, $ \mathcal{S}_p \subset \mathcal{S}_p^{(p+3)/2}$. The following result was introduced in \cite{LopMunRiu6}, generalizing a previous result found in \cite{F1} :

\begin{theorem} \label{theorem: spk} \cite{LopMunRiu6}
Let $D$ be a (super) edge-magic digraph and let $h: E(D) \to \mathcal{S}_p^k$ be any function. Then $D\otimes _{h} \mathcal{S}_p^k$ is (super) edge-magic.
\end{theorem}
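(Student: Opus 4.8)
The plan is to exhibit a single explicit labeling of the product that settles both cases at once, using Lemma~\ref{super_consecutive} to pin down the second factors. Let $D$ be a $(m,n)$-digraph, and recall that every member of $\mathcal{S}_p^k$ is a $(p,p)$-digraph whose vertices are named by a super edge-magic labeling whose induced edge sums fill the $p$ consecutive integers $[k,k+p-1]$. By the second part of Lemma~\ref{super_consecutive}, each $h(a,b)$ therefore carries a super edge-magic labeling $f_{a,b}$ of valence $2p+k$ that sends $V(h(a,b))$ onto $[1,p]$ (the identity, since a vertex carries its own label) and $E(h(a,b))$ onto $[p+1,2p]$. Since $D\otimes_h \mathcal{S}_p^k$ is a $(mp,np)$-digraph, the goal is to build a bijection onto $[1,(m+n)p]$ with constant magic sum.

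Fix an edge-magic labeling $f$ of $D$ of valence $c$, and define, for a vertex $(a,i)$ and an arc $((a,i),(b,j))$,
\[
\Phi(a,i)=(f(a)-1)p+i,\qquad \Phi((a,i),(b,j))=(f((a,b))-1)p+f_{a,b}((i,j))-p.
\]
The idea is a base-$p$ positional encoding: the first factor $f$ selects a block of $p$ consecutive labels $[(t-1)p+1,tp]$ indexed by $t=f(\cdot)\in[1,m+n]$, and the second factor selects the offset inside that block, the offset lying in $[1,p]$ in both cases (directly as $i$ for vertices, and as $f_{a,b}((i,j))-p$ for arcs).

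First I would check that $\Phi$ is a bijection onto $[1,(m+n)p]$. Since $f$ is a bijection on $V(D)\cup E(D)$, the blocks attached to distinct values of $f$ are disjoint and together exhaust $[1,(m+n)p]$; inside the block of a vertex $a$ of $D$ the offsets $i$ run over all of $[1,p]$, and inside the block of an arc $(a,b)$ the offsets $f_{a,b}((i,j))-p$ run over all of $[1,p]$ precisely because $f_{a,b}$ maps the $p$ arcs of $h(a,b)$ bijectively onto $[p+1,2p]$. This bookkeeping is the only genuinely nontrivial point, and it is exactly where super edge-magicness of the second factor is consumed.

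Finally I would compute the magic sum. For an arc $((a,i),(b,j))$,
\[
\Phi(a,i)+\Phi(b,j)+\Phi((a,i),(b,j))=\bigl(f(a)+f(b)+f((a,b))-3\bigr)p+\bigl(i+j+f_{a,b}((i,j))\bigr)-p,
\]
where the first bracket equals $c$ and $i+j+f_{a,b}((i,j))$ equals the valence $2p+k$ of $f_{a,b}$; hence the total collapses to the constant $(c-2)p+k$, independent of the chosen arc, proving $\Phi$ edge-magic. Moreover, if $f$ is super edge-magic, so that $f$ maps $V(D)$ onto $[1,m]$, then $\Phi$ sends every product vertex into $[1,mp]$ and every product arc into $[mp+1,(m+n)p]$, and the very same $\Phi$ is super edge-magic. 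The main obstacle is organizing the bijection argument above; once the blocks-and-offsets picture is in place, the magic-sum identity is a one-line cancellation.
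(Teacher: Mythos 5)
Your proposal is correct and is essentially the paper's own construction: the vertex label $(f(a)-1)p+i$ is exactly the paper's $p(a-1)+i$, and since Lemma~\ref{super_consecutive} gives $f_{a,b}((i,j))=2p+k-(i+j)$, your arc label $(f((a,b))-1)p+f_{a,b}((i,j))-p$ coincides with the paper's $p(e-1)+(k+p)-(i+j)$, with the same blocks-and-offsets bijection argument and the same valence $(\hbox{val}(f)-2)p+k=p(\hbox{val}(f)-3)+k+p$. The only cosmetic difference is that you express the arc offset through the extended edge labeling of the second factor rather than directly through the induced vertex sums.
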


In this paper, we characterize some relations among the induced labelings obtained from the $\otimes_h$-product, when we combine the odd and the even labelings of a particular super edge-magic labeling $f$, together with the complementary and the super edge-magic complementary constructions of the labelings involved. This is the content of Section 
\ref{section: Some labeling properties}. The main result of the paper is Theorem \ref{producte_super_k}, where, in some sense, we exchange the role of the factors established in Theorem \ref{theorem: spk}. Thus, we can enlarge the family of labeled graphs that we can obtain from the product. We conclude the paper with an application of this fact in Section \ref{section: cycles}.

\section{Some labeling properties obtained from the $\otimes_h$-product}

\label{section: Some labeling properties}
One of the research lines when we deal with edge-magic labelings of a particular graph $G$ is the study of the theoretical valences that are realizable. This problem has been completely solved for crowns of the form $ C_m \odot \overline{K}_n$,  where $m=p^k$ and $m=pq$, and $p$ and $q$ are primes (see \cite{LopMunRiu5,PEM_LMR} and \cite{LMP1}, respectively). In both cases, the proof is based in the construction of all theoretical super edge-magic valences and then, with the help of the odd and the even labelings, to complete the remaining valences. In this section we show some labeling properties in which we combine these labelings together with other labeling constructions.

The key point in the proof of Theorem \ref{theorem: spk} is to rename the vertices of $D$ and each element of $\mathcal{S}_p^k$ after the labels of their corresponding (super) edge-magic labeling $f$ and their super edge-magic labelings respectively and to define the labels of the product as follows: (i) the vertex $(a,i) \in V(D\otimes _{h} \mathcal{S}_p^k)$ receives the label: $p(a-1)+i$ and (ii) the arc $((a,i),(b,j)) \in E(D\otimes _{h} \mathcal{S}_p^k)$ receives the label: $p(e-1)+(k+p)-(i+j)$, where $e$ is the label of $(a,b)$ in D. Thus, for each arc $((a,i),(b,j)) \in E(D\otimes _{h} \mathcal{S}_p^k)$, coming from an arc $ e = (a,b) \in E(D)$ and an arc $ (i,j) \in E(h(a,b))$, the sum of labels is constant and equal to $p(a+b+e-3)+(k+p)$. That is, $p( \hbox{val}(f)-3)+k+p$. Thus, we get the next result.

\begin{lemma}\label{lemma: valenceinducedproduct} \cite{LopMunRiu6}
Let $\hat{f}$ be the (super) edge-magic labeling of the graph $D \otimes_h\mathcal{S}_p^k$ induced by a (super) edge-magic labeling $f$ of $D$. Then the valence of $\hat{f}$ is given by the formula
 \begin{eqnarray}\label{formula: valences}
\hbox{val}(\hat{f}) &=& p(\hbox{val}(f)-3) + k + p.
 \end{eqnarray}

\end{lemma}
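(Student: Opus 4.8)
The plan is to verify the formula by direct computation, using the explicit description of the induced labeling $\hat{f}$ recalled in the paragraph immediately preceding the statement. By Theorem~\ref{theorem: spk} we already know that $\hat{f}$ is a (super) edge-magic labeling of $D \otimes_h \mathcal{S}_p^k$, so its valence is a well-defined constant; it therefore suffices to evaluate the sum $\hat{f}((a,i)) + \hat{f}(((a,i),(b,j))) + \hat{f}((b,j))$ on a single arc and to show that it equals the claimed value.

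First I would fix an arbitrary arc $((a,i),(b,j)) \in E(D \otimes_h \mathcal{S}_p^k)$, which by definition of the product arises from an arc $e=(a,b) \in E(D)$ together with an arc $(i,j) \in E(h(a,b))$. Here $a,b$ denote the labels assigned to the tail and head of $e$ by the (super) edge-magic labeling $f$ of $D$ after the renaming described above, and $e$ denotes the label of the arc itself; likewise $i,j$ are the labels coming from the super edge-magic labeling of the relevant member of $\mathcal{S}_p^k$.

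Next I would substitute the explicit vertex and arc labels, namely $\hat{f}((a,i)) = p(a-1)+i$, $\hat{f}((b,j)) = p(b-1)+j$, and $\hat{f}(((a,i),(b,j))) = p(e-1)+(k+p)-(i+j)$. Adding these three quantities, the contributions $i$ and $j$ cancel against $-(i+j)$, and collecting the remaining terms yields
$$\hat{f}((a,i)) + \hat{f}(((a,i),(b,j))) + \hat{f}((b,j)) = p(a+b+e-3)+(k+p).$$

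The one genuine input is that $f$ is edge-magic on $D$: after renaming, the two endpoints of $e$ carry labels $a$ and $b$ and the arc carries label $e$, so that $a+b+e = \text{val}(f)$ for every arc of $D$. Substituting this identity into the displayed expression gives $p(\text{val}(f)-3)+(k+p)$, which is precisely formula~\eqref{formula: valences}. I do not expect any real obstacle here beyond bookkeeping; the only point meriting attention is the cancellation of $i$ and $j$, which shows that the sum is independent of the chosen arc $(i,j)$ of $h(a,b)$ and depends only on $\text{val}(f)$, $p$, and $k$, consistently with $\hat{f}$ being magic.
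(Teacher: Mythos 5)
Your proof is correct and is essentially identical to the paper's own argument: the paragraph preceding the lemma carries out exactly this substitution of the explicit vertex and arc labels, the cancellation of $i+j$, and the identification $a+b+e=\hbox{val}(f)$ to arrive at $p(\hbox{val}(f)-3)+k+p$. Nothing further is needed.
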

The following proposition shows a relation among complementary labelings and induced labelings obtained from the $\otimes_h$-product.
\begin{proposition}\label{Propo: induced_edge_and complementary_edge}
Let $f$ be an edge-magic labeling of a digraph $D$. Consider any function $h: E(D)\rightarrow \mathcal{S}_p^k$. Then, there exists $\bar h: E(D)\rightarrow \mathcal{S}_p^{p+3-k}$ such that
$$
D\otimes_h\mathcal{S}_p^k\cong D\otimes_{\bar h}\mathcal{S}_p^{p+3-k}\hspace{0.5cm} \hbox{and}\hspace{0.5cm} \overline {\hat{f}}\simeq \hat {\overline{f}},$$
where $\overline {\hat{f}}$ is the complementary labeling of the induced labeling of $f$ of $D\otimes_h\mathcal{S}_p^k$ and $\hat {\overline{f}}$ is the labeling of $D\otimes_{\bar h}\mathcal{S}_p^{p+3-k}$ induced by the complementary labeling of $f$.
\end{proposition}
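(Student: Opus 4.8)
The plan is to exhibit a single explicit isomorphism carrying one labeling to the other, and to realize $\bar h$ by applying the super edge-magic complementary operation factorwise. First I would define, for each arc $(a,b)\in E(D)$, the digraph $\bar h(a,b)$ to be the image of $h(a,b)$ under the super edge-magic complementary labeling $i\mapsto p+1-i$; concretely, its arc set is $\{(p+1-i,p+1-j):(i,j)\in E(h(a,b))\}$. Since $h(a,b)\in\mathcal S_p^k$ has its $p$ induced sums $i+j$ running through $\{k,\dots,k+p-1\}$, the new sums $(p+1-i)+(p+1-j)=2p+2-(i+j)$ run through $\{p+3-k,\dots,2p+2-k\}$, so $\bar h(a,b)$ is again super edge-magic with minimum induced sum $p+3-k$, that is, $\bar h(a,b)\in\mathcal S_p^{p+3-k}$. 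This defines $\bar h:E(D)\to\mathcal S_p^{p+3-k}$.

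Next I would write down the map $\phi:D\otimes_h\mathcal S_p^k\to D\otimes_{\bar h}\mathcal S_p^{p+3-k}$ that fixes the first coordinate and reverses the second, $\phi(a,i)=(a,p+1-i)$. That $\phi$ is a digraph isomorphism is immediate from the product definition: $((a,i),(b,j))$ is an arc iff $(a,b)\in E(D)$ and $(i,j)\in E(h(a,b))$, while $(\phi(a,i),\phi(b,j))=((a,p+1-i),(b,p+1-j))$ is an arc of the second product iff $(a,b)\in E(D)$ and $(p+1-i,p+1-j)\in E(\bar h(a,b))$, and these two arc conditions are equivalent by the very definition of $\bar h(a,b)$. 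This yields the asserted isomorphism $D\otimes_h\mathcal S_p^k\cong D\otimes_{\bar h}\mathcal S_p^{p+3-k}$.

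Finally I would check that $\phi$ transports $\overline{\hat f}$ to $\hat{\overline f}$, i.e. $\overline{\hat f}=\hat{\overline f}\circ\phi$ on vertices and on arcs; this is the computational heart of the argument. Writing $D$ as an $(n,m)$-graph, the key bookkeeping point is that $\overline f$ renames the vertices and arcs of $D$ by $a\mapsto n+m+1-a$ and $e\mapsto n+m+1-e$, so that in the product formulas of the preamble both the first factor (through $\overline f$) and the second factor (through $i\mapsto p+1-i$) are reversed simultaneously. Substituting $a'=n+m+1-a$, $e'=n+m+1-e$, and second coordinates $p+1-i,p+1-j$ into the vertex rule $p(a'-1)+i'$ and the arc rule $p(e'-1)+((p+3-k)+p)-((p+1-i)+(p+1-j))$, and comparing with $\overline{\hat f}(x)=np+mp+1-\hat f(x)$ applied to the rules $p(a-1)+i$ and $p(e-1)+(k+p)-(i+j)$, one verifies that both expressions agree term by term.

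I expect the only real obstacle to be keeping the two independent reversals straight: the reversal $a\mapsto n+m+1-a,\ e\mapsto n+m+1-e$ coming from $\overline f$ on the first factor, and the reversal $i\mapsto p+1-i$ coming from the super edge-magic complementary labeling on the second factor, where the minimum-sum parameter shifts from $k$ to $p+3-k$ precisely to absorb the second reversal. As a consistency check I would compare valences through Lemma \ref{lemma: valenceinducedproduct}: the quantities $\mathrm{val}(\overline{\hat f})=3(np+mp+1)-\mathrm{val}(\hat f)$ and $\mathrm{val}(\hat{\overline f})=p(\mathrm{val}(\overline f)-3)+(p+3-k)+p$ with $\mathrm{val}(\overline f)=3(n+m+1)-\mathrm{val}(f)$ both collapse to the same value, which simultaneously confirms the computation and explains why $p+3-k$ is forced.
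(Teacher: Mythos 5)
Your proposal is correct and follows essentially the same route as the paper: you define $\bar h$ by composing $h$ with the super edge-magic complement $i\mapsto p+1-i$ on the second factor (the paper's $\phi\circ h$), observe the minimum induced sum shifts to $p+3-k$, and verify via the explicit product formulas that the correspondence $(a,i)\mapsto(n+m+1-a,\,p+1-i)$ (identity on the underlying vertex of $D$, reversal on the second coordinate) carries $\overline{\hat f}$ to $\hat{\overline f}$ on both vertices and arcs. The concluding valence cross-check is a sensible addition not present in the paper but changes nothing in substance.
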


\begin{proof}
We will prove that the induced edge-magic labeled digraphs are isomorphic.
Let $\phi: \mathcal{S}_p^k\rightarrow \mathcal{S}_p^{p+3-k}$ be the function defined by $\phi (F)=F^c$, where $(\bar i,\bar j)\in F^c$ if and only if $(p+1-\bar i, p+1-\bar j)\in F$. Notice that the minimum induced edge sum of $F^c$ is $2p+2-(i+j)$, where $i+j$ is the maximum induced edge sum of $F$, that is, $i+j=k+(p-1)$. Thus, the minimum induced edge sum of $F^c$ is $(p+3)-k$.

Assume that $D$ is a $(n,m)$-digraph in which each vertex is identified with the label assigned to it by $f$. Then, the induced labeling $\hat{f}$ of the product $D\otimes_h\mathcal{S}_p^k$ is defined by $\hat{f}(a,i)=p(a-1)+i$, for any vertex $(a,i)\in V(D\otimes_h\mathcal{S}_p^k)$ and $\hat{f}((a,i), (b,j))=p(e-1)+k+p-(i+j)$, where $e$ is the label of $(a,b)$ assigned by $f$.
Then, since $|V(D\otimes_h\mathcal{S}_p^k)|=pn$ and $|E(D\otimes_h\mathcal{S}_p^k)|=pm$, the complementary labeling of $\hat{f}$ is defined by
\begin{itemize}
  \item[-] $\overline {\hat{f}}(a,i)=p(m+n)+1-p(a-1)-i$, for any vertex $(a,i)\in V(D\otimes_h\mathcal{S}_p^k)$ and
  \item[-] $\overline {\hat{f}}((a,i), (b,j))=p(m+n)+1-p(e-1)-k-p+(i+j)$, where $e$ is the label of $(a,b)$ assigned by $f$.
\end{itemize}

Let $\bar h=\phi\circ h: E(D)\rightarrow \mathcal{S}_p^{p+3-k}$ and consider the labeling $\bar f$ of $D$. Then the induced labeling $\hat{\overline f}$ of the product $D\otimes_{\bar h}\mathcal{S}_p^{p+3-k}$ is defined by $\hat{\overline f}(\bar a,\bar i)=p(m+n+1-a-1)+p+1-i$, that is,
\begin{itemize}
  \item[-]  $\hat {\overline{f}}(\bar a,\bar i)=p(m+n)+1-p(a-1)-i$,  for any vertex $(a,i)\in V(D\otimes_h\mathcal{S}_p^k)$ and
   \item[-] $\hat {\overline{f}}((\bar a,\bar i), (\bar b,\bar j))=p(m+n+1-e-1)+p+3-k+p-(\bar i+\bar j)$, where $e$ is the label of $(a,b)$ assigned by $f$. That is, $\hat {\overline{f}}((\bar a,\bar i), (\bar b,\bar j))=p(m+n)+1-p(e-1)-k-p+(i+j)$.

\end{itemize}

This proves the result.
\end{proof}

\begin{corollary}
Let $D$ be a (super) edge-magic digraph. Let $f$ and $\overline f$ be a (super) edge-magic labeling and its complement of $D$ respectively. Assume that $k=(p+3)/2$ and let $\hat{f}$ and $\overline{\hat{f}}$ be the edge-magic labeling and its complementary labeling of the graph $und(D \otimes_h \mathcal{S}^{p+3}_p)$ obtained from the labeling $f$ of D. Then, $$\hbox{val}(\overline{\hat{f}})= \hbox{val}(\hat{\overline{f}}).$$
\end{corollary}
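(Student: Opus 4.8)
The plan is to deduce this directly from Proposition~\ref{Propo: induced_edge_and complementary_edge}. First I would note that the hypothesis $k=(p+3)/2$ forces $p+3-k=k$, so that $\mathcal{S}_p^{p+3-k}=\mathcal{S}_p^{k}$. Consequently the labeling $\hat{\overline{f}}$, which a priori lives on $D\otimes_{\bar h}\mathcal{S}_p^{p+3-k}$, is in fact a labeling of $D\otimes_{\bar h}\mathcal{S}_p^{k}$, a digraph isomorphic to $D\otimes_{h}\mathcal{S}_p^{k}$ by the first assertion of that proposition. Thus both $\overline{\hat f}$ and $\hat{\overline f}$ are edge-magic labelings of (isomorphic copies of) the same underlying graph $und(D\otimes_h\mathcal{S}_p^{k})$, so that comparing their valences is meaningful.

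The core step is then immediate: Proposition~\ref{Propo: induced_edge_and complementary_edge} asserts $\overline{\hat f}\simeq\hat{\overline f}$, meaning that the two labelings agree under this isomorphism. Since a labeling-equivalence preserves the magic sum $f(x)+f(xy)+f(y)$ on every edge, it preserves the valence, whence $\hbox{val}(\overline{\hat f})=\hbox{val}(\hat{\overline f})$.

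As a consistency check I would also verify the equality by direct computation. Lemma~\ref{lemma: valenceinducedproduct}, applied with the second factor $\mathcal{S}_p^{k}$, gives $\hbox{val}(\hat f)=p(\hbox{val}(f)-3)+k+p$; the edge-magic complement formula on the product digraph (of order $pn$ and size $pm$) then yields $\hbox{val}(\overline{\hat f})=3(pn+pm+1)-\hbox{val}(\hat f)$. On the other side, Lemma~\ref{lemma: valenceinducedproduct} with the factor $\mathcal{S}_p^{p+3-k}$ gives $\hbox{val}(\hat{\overline f})=p(\hbox{val}(\overline f)-3)+(p+3-k)+p$, into which I would substitute $\hbox{val}(\overline f)=3(n+m+1)-\hbox{val}(f)$, the complement formula applied on $D$ itself. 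Expanding both expressions, the bulk term $3pn+3pm-p\,\hbox{val}(f)$ cancels on each side, and what remains reduces to the identity $3+2p-k=(p+3-k)+p$, which in fact holds for every $k$.

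I do not anticipate a genuine obstacle: the statement is an essentially immediate corollary of the preceding proposition. The only point deserving attention is the observation that $k=(p+3)/2$ is precisely the value making $p+3-k=k$, so that $\hat{\overline f}$ and $\hat f$ belong to the same family of product digraphs; this is what renders the statement well-posed, while the valence identity itself, as the computation shows, is independent of the particular value of $k$.
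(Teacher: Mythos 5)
Your proof is correct and follows the paper's own route: the paper's entire argument is the observation that $k=(p+3)/2$ forces $p+3-k=(p+3)/2$, after which the identity of valences follows from Proposition \ref{Propo: induced_edge_and complementary_edge}. Your additional direct computation via Lemma \ref{lemma: valenceinducedproduct} is a sound consistency check (and correctly notes the valence identity holds for all $k$), but the core argument matches the paper's.
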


\begin{proof}
It sufficies to observe that if $k=(p+3)/2$ then $p+3-k=(p+3)/2$.
\end{proof}

For digraphs $D$ with the same order and size, we obtain the next two results.
\begin{corollary}
Let $f$ be a super edge-magic labeling of a $(n,m)$-digraph $D$ with $m=n$. Consider any function $h: E(D)\rightarrow \mathcal{S}_p^k$. Then, there exists $\bar h: E(D)\rightarrow \mathcal{S}_p^{p+3-k}$ such that
$$
\overline {\widehat{o(f)}}\simeq \widehat {e(f_c)},$$
where $\overline {\widehat{o(f)}}$ is the complementary labeling of the induced labeling of $o(f)$ of $D\otimes_h\mathcal{S}_p^k$ and $\widehat {e(f_c)}$ is labeling of $D\otimes_{\bar h}\mathcal{S}_p^{p+3-k}$ induced by the even labeling of $f_c$.
\end{corollary}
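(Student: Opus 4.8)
The plan is to reduce the statement to Proposition~\ref{Propo: induced_edge_and complementary_edge} combined with Lemma~\ref{lemma: overline even=odd complementary}. Since $D$ has equal order and size ($m=n$), the odd labeling $o(f)$ is a genuine edge-magic labeling of $D$, so the hypotheses of Proposition~\ref{Propo: induced_edge_and complementary_edge} are met with $o(f)$ playing the role of the edge-magic labeling there. This is exactly where the assumption $m=n$ is needed: the even and odd constructions are only defined for graphs of equal order and size, so without it $o(f)$ would not be available as the input labeling.

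First I would apply Proposition~\ref{Propo: induced_edge_and complementary_edge} to the edge-magic labeling $o(f)$ together with the given function $h:E(D)\rightarrow\mathcal{S}_p^k$. This produces a function $\bar h=\phi\circ h:E(D)\rightarrow\mathcal{S}_p^{p+3-k}$, where $\phi(F)=F^c$ is the map used in that proof, for which $D\otimes_h\mathcal{S}_p^k\cong D\otimes_{\bar h}\mathcal{S}_p^{p+3-k}$ and, more to the point, $\overline{\widehat{o(f)}}\simeq\widehat{\overline{o(f)}}$. Here the right-hand side is precisely the labeling of $D\otimes_{\bar h}\mathcal{S}_p^{p+3-k}$ induced by the complementary labeling $\overline{o(f)}$ of $o(f)$.

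Next I would rewrite the inner labeling using Lemma~\ref{lemma: overline even=odd complementary}, which asserts $\overline{o(f)}=e(f_c)$. Substituting this identity of labelings of $D$ into the isomorphism obtained in the previous step yields $\overline{\widehat{o(f)}}\simeq\widehat{e(f_c)}$, which is exactly the claimed relation, and the function $\bar h$ produced in the first step is the one whose existence is asserted in the statement.

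The argument is essentially a bookkeeping chain, so I do not expect a deep obstacle. The only point requiring genuine care is confirming that the same $\bar h$ furnished by Proposition~\ref{Propo: induced_edge_and complementary_edge} (applied to $o(f)$) is the one serving the induced labeling coming from $e(f_c)$; this is immediate because $\overline{o(f)}$ and $e(f_c)$ are literally one and the same labeling of $D$ by Lemma~\ref{lemma: overline even=odd complementary}, whence the labelings they induce on $D\otimes_{\bar h}\mathcal{S}_p^{p+3-k}$ coincide.
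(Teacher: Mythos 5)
Your proposal is correct and follows essentially the same route as the paper: apply Proposition~\ref{Propo: induced_edge_and complementary_edge} to the edge-magic labeling $o(f)$ to get $\overline{\widehat{o(f)}}\simeq\widehat{\overline{o(f)}}$, then invoke Lemma~\ref{lemma: overline even=odd complementary} to replace $\overline{o(f)}$ by $e(f_c)$. Your explicit remark on why $m=n$ is needed (so that $o(f)$ exists) is a point the paper leaves implicit.
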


\begin{proof}
Let $f$ be a super edge-magic labeling of $D$, by Proposition \ref{Propo: induced_edge_and complementary_edge}, there exists $\bar h: E(D)\rightarrow \mathcal{S}_p^{p+3-k}$ such that
$$
\overline {\widehat{o(f)}}\simeq \widehat {\overline{o(f)}},$$
where $\overline {\widehat{o(f)}}$ is the complementary labeling of the induced labeling of $o(f)$ of $D\otimes_h\mathcal{S}_p^k$ and $\widehat {\overline{o(f)}}$ is labeling of $D\otimes_{\bar h}\mathcal{S}_p^{p+3-k}$ induced by the complementary of the odd labeling of $f$. Since $\overline{o(f)}\simeq e(f_c)$, we obtain the result.

\end{proof}

With a similar proof, we obtain the next corollary.
\begin{corollary}
Let $f$ be a super edge-magic labeling of $(n,m)$-digraph $D$ with $m=n$. Consider any function $h: E(D)\rightarrow \mathcal{S}_p^k$. Then, there exists $\bar h: E(D)\rightarrow \mathcal{S}_p^{p+3-k}$ such that
$$
\overline {\widehat{e(f)}}\simeq \widehat {o(f_c)},$$
where $\overline {\widehat{e(f)}}$ is the complementary labeling of the induced labeling of $e(f)$ of $D\otimes_h\mathcal{S}_p^k$ and $\widehat {o(f_c)}$ is labeling of $D\otimes_{\bar h}\mathcal{S}_p^{p+3-k}$ induced by the odd labeling of $f_c$.
\end{corollary}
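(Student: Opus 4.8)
The plan is to mirror the argument used for the preceding corollary, replacing the odd labeling by the even one throughout. The first thing to observe is that the hypothesis $m=n$ guarantees that $D$ has equal order and size, so the even labeling $e(f)$ is a well-defined edge-magic labeling of $D$ in the sense recalled just before Lemma~\ref{lemma: overline even=odd complementary}. This is precisely the input that Proposition~\ref{Propo: induced_edge_and complementary_edge} requires, since that proposition is stated for an arbitrary edge-magic (not necessarily super edge-magic) labeling.

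Applying Proposition~\ref{Propo: induced_edge_and complementary_edge} to the edge-magic labeling $e(f)$ and the given function $h: E(D)\rightarrow\mathcal{S}_p^k$, I obtain a function $\bar h: E(D)\rightarrow\mathcal{S}_p^{p+3-k}$ together with an isomorphism
$$\overline{\widehat{e(f)}}\simeq\widehat{\overline{e(f)}},$$
where $\widehat{\overline{e(f)}}$ is the labeling of $D\otimes_{\bar h}\mathcal{S}_p^{p+3-k}$ induced by the complementary labeling $\overline{e(f)}$ of $D$. At this stage the right-hand side is still expressed in terms of $\overline{e(f)}$ rather than $o(f_c)$, so one further identification is needed to reach the claimed form.

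The final step is to invoke Lemma~\ref{lemma: overline even=odd complementary}, and specifically the identity $\overline{e(f)}=o(f_c)$. Substituting this into the induced labeling yields $\widehat{\overline{e(f)}}=\widehat{o(f_c)}$, and hence $\overline{\widehat{e(f)}}\simeq\widehat{o(f_c)}$, which is the desired isomorphism.

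The only point that demands care, rather than computation, is the direction in which Lemma~\ref{lemma: overline even=odd complementary} is applied: here one needs $\overline{e(f)}=o(f_c)$, whereas the preceding corollary used the companion identity $\overline{o(f)}=e(f_c)$. No fresh calculation is required, because the explicit vertex and arc formulas governing the complementary and induced constructions have already been verified inside the proof of Proposition~\ref{Propo: induced_edge_and complementary_edge}; the whole content of the argument is the correct bookkeeping of which complementary construction is matched to which factor.
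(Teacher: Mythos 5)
Your proof is correct and follows exactly the route the paper intends: the paper proves the companion corollary by applying Proposition~\ref{Propo: induced_edge_and complementary_edge} to the relevant edge-magic labeling and then substituting via Lemma~\ref{lemma: overline even=odd complementary}, and states that the present corollary follows ``with a similar proof,'' which is precisely your argument with $e(f)$ in place of $o(f)$ and the identity $\overline{e(f)}=o(f_c)$ in place of $\overline{o(f)}=e(f_c)$.
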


The next result is similar to Proposition \ref{Propo: induced_edge_and complementary_edge}.

\begin{proposition}\label{Propo: induced_edge_and complementary_super_edge}
Let $f$ be a super edge-magic labeling of digraph $D$. Consider any function $h: E(D)\rightarrow \mathcal{S}_p^k$. Then, there exists $\bar h: E(D)\rightarrow \mathcal{S}_p^{p+3-k}$ such that
$$
(\hat{f})_c\simeq \widehat {f_c},$$
where $(\hat{f})_c$ is the super edge-magic complementary labeling of the induced labeling of $f$ of $D\otimes_h\mathcal{S}_p^k$ and $\widehat {{f}_c}$ is the labeling of $D\otimes_{\bar h}\mathcal{S}_p^{p+3-k}$ induced by the super edge-magic complementary labeling of $f$. Moreover, val $((\hat{f})_c)=$val$(\widehat {f_c})$.
\end{proposition}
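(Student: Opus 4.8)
The plan is to mirror the proof of Proposition \ref{Propo: induced_edge_and complementary_edge}, replacing the ordinary complement by the super edge-magic complement while keeping the same second-factor map $\phi(F)=F^c$, so that $\bar h=\phi\circ h:E(D)\rightarrow\mathcal{S}_p^{p+3-k}$. As before I identify each vertex of $D$ with its $f$-label, so $D$ is an $(n,m)$-digraph with vertex set $[1,n]$ and edge labels in $[n+1,n+m]$, and I regard $(a,i)\mapsto(a,p+1-i)$ as the underlying digraph isomorphism $D\otimes_h\mathcal{S}_p^k\to D\otimes_{\bar h}\mathcal{S}_p^{p+3-k}$ exactly as in Proposition \ref{Propo: induced_edge_and complementary_edge}. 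It then remains to compute both labelings on vertices and edges and to check that they agree under this isomorphism.

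The first and main technical step is to record how $f_c$ acts on $D$. On vertices it is immediate that $f_c(a)=n+1-a$. On edges, however, $f_c$ is \emph{not} obtained by directly complementing the edge label; instead the edge labels are re-derived from Lemma \ref{super_consecutive}. Using $f_c(ab)=\mathrm{val}(f_c)-(f_c(a)+f_c(b))$ together with $f(ab)=\mathrm{val}(f)-(a+b)$ and the valence identity $\mathrm{val}(f_c)=4n+m+3-\mathrm{val}(f)$, a short computation gives that the edge of $f$-label $e$ receives $f_c$-label $\bar e=2n+m+1-e$. This is the computation that must be handled with care, since it is exactly the place where the super edge-magic complement genuinely differs from the ordinary complement used in Proposition \ref{Propo: induced_edge_and complementary_edge}.

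With these formulas in hand the two labelings are computed explicitly. For the left-hand side I use that $\hat f$ is super edge-magic on a $(pn,pm)$-graph, so $(\hat f)_c(a,i)=pn+1-p(a-1)-i$, and via $\mathrm{val}((\hat f)_c)=4pn+pm+3-\mathrm{val}(\hat f)$ and Lemma \ref{lemma: valenceinducedproduct}, the edge $((a,i),(b,j))$ gets label $2pn+pm+1-k-pe+(i+j)$ after substituting $a+b=\mathrm{val}(f)-e$. For the right-hand side I feed $f_c$ (with $\bar e=2n+m+1-e$) and the shifted parameter $k'=p+3-k$ into the induced-labeling recipe of Lemma \ref{lemma: valenceinducedproduct}, evaluating $\widehat{f_c}$ at the image vertex $(\bar a,p+1-i)$ with $\bar a=n+1-a$: the vertices give $p(\bar a-1)+(p+1-i)=pn+1-p(a-1)-i$ and the edges give $p(\bar e-1)+(p+3-k)+p-((p+1-i)+(p+1-j))=2pn+pm+1-k-pe+(i+j)$.

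Comparing the two, both the vertex and the edge formulas coincide under $(a,i)\mapsto(\bar a,p+1-i)$, which proves $(\hat f)_c\simeq\widehat{f_c}$. The valence statement then follows either directly from this identification, or by a one-line check that $p(\mathrm{val}(f_c)-3)+(p+3-k)+p$ equals $4pn+pm+3-\mathrm{val}(\hat f)$, both sides reducing to $4pn+pm+2p+3-k-p\,\mathrm{val}(f)$. I expect the only genuine obstacle to be the edge relabeling of $f_c$ in the second paragraph; once $\bar e=2n+m+1-e$ is established, the remaining arithmetic is routine bookkeeping that parallels Proposition \ref{Propo: induced_edge_and complementary_edge} line by line.
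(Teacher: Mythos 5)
Your proposal is correct and follows essentially the same route as the paper: the same map $\phi(F)=F^c$, the same choice $\bar h=\phi\circ h$, and the same vertex identification $(a,i)\mapsto(n+1-a,\,p+1-i)$, under which the vertex labels of $(\hat f)_c$ and $\widehat{f_c}$ coincide. Your explicit verification of the edge labels (via $\bar e=2n+m+1-e$) and of the valences is extra bookkeeping that the paper leaves implicit, since a super edge-magic labeling is determined by its vertex part; it is a welcome, but not a different, argument.
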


\begin{proof}
Let $\phi: \mathcal{S}_p^k\rightarrow \mathcal{S}_p^{p+3-k}$ be the function defined by $\phi (F)=F^c$, where $(\bar i,\bar j)\in F^c$ if and only if $(p+1-\bar i, p+1-\bar j)\in F$. Notice that the minimum induced edge sum of $F^c$ is $2p+2-(i+j)$, where $i+j$ is the maximum induced edge sum of $F$, that is, $i+j=k+(p-1)$. Thus, the minimum induced edge sum of $F^c$ is $(p+3)-k$.

Assume that $D$ is a $(n,m)$-digraph in which each vertex is identified with the label assigned to it by $f$. Then, the induced (super edge-magic) labeling $\hat{f}$ of the product $D\otimes_h\mathcal{S}_p^k$ is defined by $\hat{f}(a,i)=p(a-1)+i$, for any vertex $(a,i)\in V(D\otimes_h\mathcal{S}_p^k)$.
Since $|V(D\otimes_h\mathcal{S}_p^k)|=pn$, the super edge-magic complementary labeling of $\hat{f}$ is defined by
\begin{itemize}
  \item[-] $(\hat{f})_c(a,i)=pn+1-(p(a-1)+i)$, for any vertex $(a,i)\in V(D\otimes_h\mathcal{S}_p^k)$.
\end{itemize}

Let $\bar h=\phi\circ h: E(D)\rightarrow \mathcal{S}_p^{p+3-k}$ and consider the labeling $f_c$ of $D$. Then the induced labeling $\widehat{f_c}$ of the product $D\otimes_{\bar h}\mathcal{S}_p^{p+3-k}$ is defined by $\widehat{ f_c}(\bar a,\bar i)=p(\bar a-1)+\bar i$, that is,
\begin{itemize}
  \item[-]  $\widehat{ f_c}(\bar a,\bar i)=p(n-a)+p+1-i$,  for any vertex $(a,i)\in V(D\otimes_h\mathcal{S}_p^k)$
\end{itemize}

This proves the result.
\end{proof}

\section{The main result} \label{section: main}
Since the $\otimes_h$-product was first introduced in 2008 \cite{F1}, it has been proven to be an excellent technique to better understand many different types of labelings, as for instance (super) edge-magic labelings and harmonious labelings. The lack of enumerative results involving graph labelings constitutes a big gap in the literature of graph labelings that this product has helped to fill enormously. Also further applications outside the world of graph labeling have been found for the $\otimes_h$-product, as for instance it introduces new ways to construct Skolem and Langford type sequences \cite{LopMun15.a}. In summary, the $\otimes$-product constitutes a big breakthru into the world of graph labeling that allows to have a better and deeper understanding of the subject.

In all the results involving the $\otimes_h$-product, since the very beginning, it seems to be a constant to use super edge-magic labeled graphs as the second factor of the product, or at least graphs that in a way or another come from super edge-magic graphs \cite{ILMR,LopMunRiu1,LopMunRiu6}. The power of this section lies in the fact that it allows us to use other types of labeled graphs as a second factor of the product and this allows to refresh the ways of attacking old famous problems in the subject of graph labelings as we will in the next lines.

We now introduce a new family $\mathcal{T}^q_\sigma$ of edge-magic labeled graphs.
An edge-magic labeled digraph $F$ is in $\mathcal{T}^q_\sigma$ if $V(F)=V$, $|E(F)|=q$ and the magic sum of the edge-magic labeling is equal to $\sigma$.

\begin{theorem}
\label{producte_super_k}
Let $D\in \mathcal{S}_n^k$ and let
  $h$ be any function $h:E(D)\rightarrow \mathcal{T}^q_\sigma$.
Then $D\otimes_h \mathcal{T}^q_\sigma$ is edge-magic.
\end{theorem}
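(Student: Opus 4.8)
The plan is to mimic the labeling construction behind Theorem \ref{theorem: spk} and Lemma \ref{lemma: valenceinducedproduct}, but with the roles of the two factors exchanged. First I would fix the naming conventions. Rename the vertices of $D$ after its super edge-magic labeling, so that $V(D)=[1,n]$ and, by Lemma \ref{super_consecutive}, the arc sums $\{a+b:(a,b)\in E(D)\}$ run exactly through the $n$ consecutive integers $k,k+1,\ldots,k+n-1$. Likewise, rename the vertices of every member of $\mathcal{T}^q_\sigma$ after its edge-magic labeling, so that the common vertex set is a fixed $p$-subset $V\subseteq[1,p+q]$ (here $p=|V|$) and each arc $(i,j)$ of $h(a,b)$ carries the label $\sigma-i-j$. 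Since every member of $\mathcal{T}^q_\sigma$ is an edge-magic $(p,q)$-graph with the same vertex-label set $V$ and the same magic sum $\sigma$, its arc labels are forced to be exactly the complementary set $B=[1,p+q]\setminus V$; this shared partition $[1,p+q]=V\cup B$ is what will drive the whole argument.

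With $P=D\otimes_h\mathcal{T}^q_\sigma$ a $(np,nq)$-graph, I would then define $\hat f:V(P)\cup E(P)\to[1,n(p+q)]$ by assigning to each vertex $(a,i)$ the label $n(i-1)+a$ and to each arc $((a,i),(b,j))$, arising from $(a,b)\in E(D)$ and $(i,j)\in E(h(a,b))$, the label $n(\varepsilon-1)+(k+n)-(a+b)$, where $\varepsilon=\sigma-i-j$ is the label of $(i,j)$ in $h(a,b)$. The idea is that the second (edge-magic) factor now supplies the block structure --- each label $c\in[1,p+q]$ is ``blown up'' to the block $[n(c-1)+1,nc]$ of length $n$ --- while the super edge-magic first factor fills in the offsets inside each block.

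The magic-sum check is the short step. For an arc $((a,i),(b,j))$ the three labels add to
\[
\big(n(i-1)+a\big)+\big(n(\varepsilon-1)+(k+n)-(a+b)\big)+\big(n(j-1)+b\big)=n(i+j+\varepsilon-3)+(k+n),
\]
and since $i+j+\varepsilon=\sigma$ this collapses to the constant $n(\sigma-2)+k$, independent of the chosen arc; so $\hat f$ is edge-magic with valence $n(\sigma-2)+k$, in clean analogy with formula \eqref{formula: valences} (with $p\leftrightarrow n$ and $\mathrm{val}(f)\leftrightarrow\sigma$).

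The main obstacle is verifying that $\hat f$ is a bijection onto $[1,n(p+q)]$, and here the two structural hypotheses must be combined. On the vertex side, as $a$ ranges over $[1,n]$ the labels $n(i-1)+a$ sweep out the whole block $[n(i-1)+1,ni]$, so the vertices occupy exactly $\bigcup_{i\in V}[n(i-1)+1,ni]$. On the arc side I would fix a value $\varepsilon\in B$: since $h(a,b)$ has arc-label set $B$, for every $(a,b)\in E(D)$ there is a unique arc of $h(a,b)$ with label $\varepsilon$, and as $(a,b)$ runs over $E(D)$ the quantity $(k+n)-(a+b)$ runs over $\{1,\ldots,n\}$ precisely because the sums $a+b$ exhaust $\{k,\ldots,k+n-1\}$ --- this is exactly where super edge-magicity of $D$ enters. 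Hence the arcs occupy $\bigcup_{\varepsilon\in B}[n(\varepsilon-1)+1,n\varepsilon]$. As $V$ and $B$ partition $[1,p+q]$, the vertex blocks and arc blocks are pairwise disjoint and tile $[1,n(p+q)]$, giving the desired bijection. The only point needing real care is injectivity of the arc labeling: if two arcs share the same $\varepsilon$ then, within a single edge-magic member arc labels are distinct and distinct edges of $D$ yield distinct sums $a+b$, so the pair $(\varepsilon,a+b)$ determines both the block and the position inside it, and hence the arc.
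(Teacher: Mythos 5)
Your proof is correct, but it is not the paper's construction: you and the paper ``transpose'' the block structure. The paper assigns to a vertex $(i,a)$ (with $i\in V(D)$, $a\in V$) the label $(p+q)(i-1)+a$ and to an arc the label $(p+q)(k+n-(i+j)-1)+(\sigma-(a+b))$, so the $n$ blocks of length $p+q$ are indexed by the super edge-magic factor $D$, and \emph{each} block is filled by the whole partition $[1,p+q]=V\cup B$ coming from one edge-magic labeling of the second factor. You instead take $p+q$ blocks of length $n$ indexed by the labels of the edge-magic factor, each block filled entirely by vertices (if the index lies in $V$) or entirely by arcs (if it lies in $B$), with the consecutive arc sums of $D$ supplying the offsets. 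Both tilings of $[1,n(p+q)]$ are valid and both bijectivity arguments rest on the same two facts (the sums $a+b$ over $E(D)$ hit $n$ consecutive integers bijectively; the arc labels of each member of $\mathcal{T}^q_\sigma$ are exactly $[1,p+q]\setminus V$), so the theorem is proved either way. The difference that matters downstream is the valence: the paper gets $(p+q)(k+n-3)+\sigma$ (Lemma \ref{lema: induced_valence_product_now}, used in Proposition \ref{propo: ALMOST_Labels from McQuilian}(ii) to produce the valence $3(p-1)m+\hbox{val}(f)$ for cycles), whereas yours gives $n(\sigma-2)+k$. Your computation of that valence and your injectivity check are both sound; in fact your construction yields a genuinely different edge-magic labeling of the same product, which could be recorded as a companion valence formula rather than a replacement for the paper's.
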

\proof  Let $p=|V|$. We identify the vertices of $D$ and each element of $\mathcal{T}^q_\sigma$ after the labels of their corresponding super edge-magic labeling and edge-magic labeling, respectively. Consider the following labeling of $D\otimes_h \mathcal{T}^q_\sigma$:
\begin{enumerate}
  \item If $(i,a)\in V(D\otimes_h \mathcal{T}^q_\sigma)$ we assign to the vertex the label: $$(p+q)(i-1)+a.$$
  \item If $((i,a),(j,b))\in E (D\otimes_h \mathcal{T}^q_\sigma)$ we assign to the arc the label: $$(p+q)(k+n-(i+j)-1)+(\sigma-(a+b)).$$
\end{enumerate}
Notice that, since $D\in \mathcal{S}_n^k$ is labeled with a super edge-magic labeling with minimum sum of the adjacent vertices equal to  $k$, we have $$\{(k+n)-(i+j): \ (i,j)\in E(D )\}=[1, n].$$ Moreover, since each element $F\in \mathcal{T}^q_\sigma$, it follows that $$\{(\sigma-(a+b): \ (a,b)\in E(F )\}=[1, p+q]\setminus V .$$
Thus, the set of labels in $D\otimes_h \mathcal{T}^q_\sigma$ covers all elements in $[1, n(p+q)]$. Moreover, for each arc $((i,a)(j,b))\in E (D\otimes_h \mathcal{T}^q_\sigma)$ the sum of the labels is constant and is equal to:
   $ (p+q)(k+n-3)+\sigma.$
\qed

From the previous proof, we also conclude the next result.

\begin{lemma}\label{lema: induced_valence_product_now}
Let $D \in \mathcal{S}_n^k$ and $\widetilde{h}$ be the edge-magic labeling of the graph $D\otimes_h \mathcal{T}^q_\sigma$,  induced by the super edge-magic labeling of $D$ and the function $h: E(D)\rightarrow \mathcal{T}^q_\sigma$. Then the valence of $\widetilde{h}$ is given by the formula
 \begin{eqnarray}\label{formula: valences_now}
\hbox{val}(\widetilde{h}) &=& (p+q)(k+n-3)+\sigma,
 \end{eqnarray}
where $p=|V(F)|$, for every $F\in \mathcal{T}^q_\sigma$.
\end{lemma}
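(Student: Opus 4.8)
The plan is to read off the valence directly from the labeling constructed in the proof of Theorem \ref{producte_super_k}. Since that labeling has already been shown to be edge-magic, its magic sum is precisely the constant edge-sum exhibited there, so the whole task reduces to evaluating that common sum in closed form.

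Concretely, I would fix an arbitrary arc $((i,a),(j,b)) \in E(D\otimes_h \mathcal{T}^q_\sigma)$ and add the labels of its two endpoints and the arc itself, using the three formulas from the theorem: the endpoint labels $(p+q)(i-1)+a$ and $(p+q)(j-1)+b$, together with the arc label $(p+q)(k+n-(i+j)-1)+(\sigma-(a+b))$. First I would collect the terms not divisible by $(p+q)$, namely $a$, $b$, and $\sigma-(a+b)$; these combine to $\sigma$, so the $a$ and $b$ contributions cancel. Then I would factor $(p+q)$ out of the remaining three summands and simplify the bracket $(i-1)+(j-1)+(k+n-(i+j)-1)$, in which $i$ and $j$ cancel, leaving $k+n-3$. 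This yields the claimed value $(p+q)(k+n-3)+\sigma$.

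The one point worth emphasising is that this value is genuinely a constant: it depends neither on the choice of arc $(i,j)$ in $D$ nor on the choice of arc $(a,b)$ in the corresponding member $h(i,j)$ of $\mathcal{T}^q_\sigma$, precisely because both $i+j$ and $a+b$ drop out after the cancellations above. Thus every edge of $D\otimes_h \mathcal{T}^q_\sigma$ carries the same sum, which is exactly what it means for this quantity to be the valence of $\widetilde{h}$. There is no real obstacle here; the result is an immediate corollary of the computation already carried out in the proof of Theorem \ref{producte_super_k}, recorded separately only because the explicit valence formula is needed in the applications of Section \ref{section: cycles}.
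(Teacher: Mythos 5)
Your proposal is correct and matches the paper exactly: the paper also derives this lemma directly from the constant edge-sum $(p+q)(k+n-3)+\sigma$ computed at the end of the proof of Theorem \ref{producte_super_k}, introducing the lemma with ``From the previous proof, we also conclude the next result.'' Your explicit verification of the cancellation of $i+j$ and $a+b$ is just a spelled-out version of that same computation.
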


\subsection{More labeling properties obtained from the $\otimes_h$-product}
Recall that, for every labeled digraph $D\in \mathcal{S}_n^k$ we can consider $D^c\in \mathcal{S}_n^{n+3-k}$, such that $D\cong D^c$, just by taking the super edge-magic complementary labeling that defines $D$.

\begin{proposition}
\label{proposition: property}
Let $D\in \mathcal{S}_n^k$ and let $h:E(D)\rightarrow \mathcal{T}^q_\sigma$ be any function.
Then, there exists $h^c: E(D^c)\rightarrow \mathcal{T}^q_{3(p+q+1)-\sigma}$ such that
$$D\otimes_{h}\mathcal{T}^q_{\sigma}\simeq D^c\otimes_{h^c}\mathcal{T}^q_{3(p+q+1)-\sigma} ,\  \hbox{and} \ \widetilde{h^c}\simeq\bar{\widetilde{h}},$$
where $\bar{\widetilde{h}}$ is the edge-magic complementary labeling of the induced labeling of $D\otimes_h\mathcal{T}^q_\sigma$ and $\widetilde{h^c}$ is the induced labeling of $D^c\otimes_{h^c}\mathcal{T}^q_{3(p+q+1)-\sigma}$.
\end{proposition}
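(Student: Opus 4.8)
The plan is to follow the template of Proposition \ref{Propo: induced_edge_and complementary_edge}, but now with the roles of the two factors exchanged: the complementary construction will act on the second factor $\mathcal{T}^q_\sigma$, while the passage from $D$ to $D^c$ absorbs the first factor. First I would define $\phi:\mathcal{T}^q_\sigma\to\mathcal{T}^q_{3(p+q+1)-\sigma}$ by $\phi(F)=\overline{F}$, the edge-magic complementary labeling of $F$. Since each $F\in\mathcal{T}^q_\sigma$ is a $(p,q)$-labeled digraph of magic sum $\sigma$, the identity $\hbox{val}(\overline{f})=3(p+q+1)-\hbox{val}(f)$ recorded in the introduction shows that $\overline{F}$ is edge-magic with magic sum $3(p+q+1)-\sigma$, so $\phi$ is well defined. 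I then set $h^c=\phi\circ h$, where each arc $(\bar i,\bar j)\in E(D^c)$ is identified with the arc $(i,j)\in E(D)$ through the super edge-magic complementary labeling that defines $D^c$ (so that $\bar i=n+1-i$); in particular $h^c(\bar i,\bar j)=\overline{h(i,j)}$.

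Next I would produce the isomorphism explicitly. In $\overline{F}$ the vertex named $a$ in $F$ is renamed $p+q+1-a$, and the arc $(a,b)\in E(F)$ becomes $(p+q+1-a,\,p+q+1-b)$; combining this with the correspondence $i\leftrightarrow n+1-i$ for the first factor, the map
$$(i,a)\longmapsto(n+1-i,\ p+q+1-a)$$
is a bijection from $V(D\otimes_h\mathcal{T}^q_\sigma)$ to $V(D^c\otimes_{h^c}\mathcal{T}^q_{3(p+q+1)-\sigma})$ that sends arcs to arcs, because $(i,j)\in E(D)$ if and only if $(\bar i,\bar j)\in E(D^c)$, and $(a,b)\in E(h(i,j))$ if and only if $(p+q+1-a,\,p+q+1-b)\in E(\overline{h(i,j)})$. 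This yields the asserted digraph isomorphism $D\otimes_h\mathcal{T}^q_\sigma\simeq D^c\otimes_{h^c}\mathcal{T}^q_{3(p+q+1)-\sigma}$.

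The heart of the argument, and the step I expect to be the main obstacle, is to verify that this isomorphism carries $\widetilde{h^c}$ onto $\bar{\widetilde{h}}$. Since $D\otimes_h\mathcal{T}^q_\sigma$ is an $(np,nq)$-graph, its complementary labeling satisfies $\bar{\widetilde{h}}(x)=n(p+q)+1-\widetilde{h}(x)$. Applying the vertex formula of Theorem \ref{producte_super_k} and simplifying gives $\bar{\widetilde{h}}(i,a)=(p+q)(n-i)+p+q+1-a$, which is exactly the value $\widetilde{h^c}(n+1-i,\,p+q+1-a)$ returned by the same formula in the second product. For the arcs I would substitute the new parameters $k'=n+3-k$ and $\sigma'=3(p+q+1)-\sigma$, together with the complemented names $\bar i=n+1-i$ and $\bar a=p+q+1-a$, into the arc formula of Theorem \ref{producte_super_k}. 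The delicate part is the bookkeeping of the five parameters $k,n,p,q,\sigma$, but once the quantity $k'+n-(\bar i+\bar j)-1$ is seen to collapse to $(i+j)-k$ and $\sigma'-(\bar a+\bar b)$ to $(p+q+1)-\sigma+(a+b)$, both $\bar{\widetilde{h}}((i,a),(j,b))$ and $\widetilde{h^c}((n+1-i,p+q+1-a),(n+1-j,p+q+1-b))$ reduce to $(p+q)\bigl((i+j)-k\bigr)+(p+q+1)-\sigma+(a+b)$. Agreement of the two arc labels completes the proof.
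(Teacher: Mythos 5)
Your proposal is correct and follows essentially the same route as the paper: define the complementary map $\psi(F)=\overline{F}$ on the second factor, take $h^c=\psi\circ h$ composed with the relabeling $i\mapsto n+1-i$ coming from the super edge-magic complement of $D$, and check that under the bijection $(i,a)\mapsto(n+1-i,\,p+q+1-a)$ both the vertex and arc formulas for $\bar{\widetilde{h}}$ and $\widetilde{h^c}$ coincide. Your computation of the arc labels (both reducing to $(p+q)(i+j-k)+(p+q+1)-\sigma+(a+b)$) matches the paper's, which simply rewrites $\bar{\widetilde{h}}$ in the form of the induced labeling of $D^c\otimes_{h^c}\mathcal{T}^q_{3(p+q+1)-\sigma}$ rather than exhibiting the isomorphism explicitly.
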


\begin{proof}
Let $\phi: \mathcal{S}_n^k\rightarrow \mathcal{S}_n^{n+3-k}$ be the function defined by $\phi (D)=D^c$, where $(\bar i,\bar j)\in D^c$ if and only if $(p+1-\bar i, p+1-\bar j)\in D$ and $\psi: \mathcal{T}^q_\sigma\rightarrow \mathcal{T}^q_{3(p+q+1)-\sigma}$ be the function defined by $\psi (F)=\bar F$, where $(\bar i,\bar j)\in \bar F$ if and only if $(p+q+1-\bar i, p+q+1-\bar j)\in F$. 

Let $h:E(D)\rightarrow \mathcal{T}^q_\sigma$ be any function. Then, the induced edge-magic labeling $\widetilde{h}$ of the product $D\otimes_h \mathcal{T}^q_\sigma$ is defined by $\widetilde{h}(i,a)=(p+q)(i-1)+a$, for any vertex $(i,a)\in V(D\otimes_h \mathcal{T}^q_\sigma)$ and by $\widetilde{h}((i,a),(j,b))=(p+q)(k+n-(i+j)-1)+(\sigma-a-b)$, for any arc $((i,a),(j,b))\in E(D\otimes_h \mathcal{T}^q_\sigma)$.
Then, since $|V(D\otimes_h\mathcal{T}^q_\sigma)|=pn$ and $|E(D\otimes_h\mathcal{T}^q_\sigma)|=qn$, the complementary labeling $\bar{\widetilde{h}}$ of $D\otimes_h \mathcal{T}^q_\sigma$ is defined by

\begin{itemize}
  \item[-] $\bar{\widetilde{h}}(i,a)=(p+q)n+1-(p+q)(i-1)-a$, that is, $$\bar{\widetilde{h}}(i,a)=(p+q)(n+1-i-1)+(p+q+1-a),$$ for any vertex $(i,a)\in V(D\otimes_h\mathcal{T}^q_\sigma)$ and
  \item[-] $\bar{\widetilde{h}}((i,a), (j,b))=(p+q)n+1-(p+q)(k+n-(i+j)-1)-(\sigma-a-b)$, that is, 
  \begin{eqnarray*}
\bar{\widetilde{h}}((i,a), (j,b))&=&(p+q)(n+3-k+n-(n+1-i)-(n+1-j)-1)\\
&+&3(p+q+1)-\sigma-(p+q+1-a)-(p+q+1-b).
\end{eqnarray*}

 \end{itemize}

Thus, the function $h^c: E(D^c)\rightarrow \mathcal{T}^q_{3(p+q+1)-\sigma}$ defined by 
\begin{equation*}h^c(i,j)=\psi (h(n+1-i,n+1-j)),
\end{equation*} 
induces a labeling $\widetilde{h^c}$ of $D^c\otimes_{h^c}\mathcal{T}^q_{3(p+q+1)-\sigma}$, which is isomorphic to the labeling  $\bar{\widetilde{h}}$ of $D\otimes_h\mathcal{T}^q_\sigma$. Therefore, the result follows.
\end{proof}

\section{Magic sums of cycles} \label{section: cycles}
A famous conjecture of Godbold and Slater \cite{GodSla98} states that, for $n=2t+1\ge 7$ and $5t+4\le j\le 7t+5$ and for $n=2t\ge 4$ and $5t+2\le j\le 7t+1$ there is an edge-magic labeling of $C_n$, with valence $k=j$.

Let $G$ be a $(p,q)$-graph and $f:V(G)\cup E(G)\rightarrow [1,p+q]$ be a bijective function. The {\it $f$-weight} of a vertex $v\in V(G)$, $w_f(v)$, is defined to be $w_f(v)=f(v)+\sum f(e)$, where the sum is taken over all edges $e$ incident to $v$. The function $f$ is said to be a {\it vertex-magic total labeling} \cite{MacMilSlaWa}, if the vertex weight $wt_f(v)$ does not depend on $v$. It turns out, that for $2$-regular graphs the notions of edge-magic labeling and vertex-magic total labeling coincide, since we can easily obtain a vertex-magic total labeling from an edge-magic labeling and viceversa, just by translating one unit clockwise the labels: the label of each edge is assigned to one of its adjacent vertices, and  the label of the other one is assigned to the edge.

Dan McQuillan proved in \cite{McQ09} the next result that was originally stated in terms of vertex-magic total labelings.

\begin{proposition}\cite{McQ09} \label{propo: Labels from McQuilian}
Let $p$ be odd. Assume that $C_m$ has an edge-magic labeling $f$. Then,
\begin{enumerate}
  \item[(i)] $C_{pm}$ has an edge-magic labeling with valence $p(\hbox{val}(f))-3(p-1)/2$, and
  \item[(ii)] $C_{pm}$ has an edge-magic labeling with valence $3(p-1)m+\hbox{val}(f)$.
\end{enumerate}
\end{proposition}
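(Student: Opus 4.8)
The plan is to prove Proposition \ref{propo: Labels from McQuilian} by exhibiting $C_{pm}$ as a $\otimes_h$-product and then applying the machinery developed in Section \ref{section: main}. The key observation is that a cycle $C_{pm}$ decomposes naturally as $C_p \otimes_h \mathcal{T}$, where the first factor $C_p$ carries a super edge-magic orientation (so $C_p \in \mathcal{S}_p^k$ for the appropriate $k$, which exists since $p$ is odd and odd cycles are super edge-magic) and the second factor is a single-edge graph $P_2$, suitably labeled so as to sit in some family $\mathcal{T}^q_\sigma$ with $q=1$. Concretely, I would set $n=p$, take $D=\vec{C}_p \in \mathcal{S}_p^{(p+3)/2}$, and let each $\mathcal{T}^1_\sigma$ be a labeled arc on the vertex set $V$ of $C_m$ so that the product reassembles the vertices and edges of $C_m$ around each vertex of $C_p$ into the larger cycle $C_{pm}$.

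First I would verify that the $\otimes_h$-product of $\vec{C}_p$ with an appropriate family of arcs really yields (the underlying graph of) $C_{pm}$: this is the structural combinatorial step, checking that the arcs $((i,a),(j,b))$ prescribed by the product, for $(i,j)\in E(\vec{C}_p)$ and $(a,b)$ the chosen edge of $h(i,j)$, link up into a single cycle of length $pm$. Second, for part (i), I would start from an edge-magic labeling $f$ of $C_m$ and repackage it as the data of the family $\mathcal{T}^q_\sigma$ with $\sigma=\mathrm{val}(f)$, so that Theorem \ref{producte_super_k} applies and Lemma \ref{lema: induced_valence_product_now} immediately computes the valence as $(p+q)(k+n-3)+\sigma$; the remaining work is to substitute $k=(p+3)/2$, $n=p$ (and the correct value of $q$) and simplify the closed form to $p\,\mathrm{val}(f)-3(p-1)/2$. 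Third, for part (ii), I expect to obtain the alternative valence by a complementary twist: applying Proposition \ref{proposition: property}, which replaces $\sigma$ by $3(p+q+1)-\sigma$ and $D$ by $D^c\in\mathcal{S}_p^{p+3-k}$, should produce the second arithmetic expression after the analogous substitution and simplification.

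The main obstacle I anticipate is the bookkeeping that matches the abstract product labeling to the concrete cyclic structure, namely pinning down exactly how the parameters $p$, $q$, $n$, $k$, $\sigma$ of the general construction correspond to the cycle parameters, and confirming that the values $(k+n)-(i+j)$ ranging over $[1,n]$ and $\sigma-(a+b)$ ranging over $[1,p+q]\setminus V$ combine to give precisely the consecutive label set $[1,(p+q)n]$ needed for an edge-magic labeling of $C_{pm}$. A secondary subtlety is ensuring the orientation on $\vec{C}_p$ is chosen so that the product is connected and is a genuine $2$-regular graph (hence a single cycle rather than a disjoint union of shorter cycles); this is where the hypothesis that $p$ is odd should be used, both to guarantee $C_p\in\mathcal{S}_p^{(p+3)/2}$ and to force connectivity of the resulting product. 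Once the correspondence of parameters is fixed, the two valence formulas in (i) and (ii) are routine algebraic consequences of Lemma \ref{lema: induced_valence_product_now} and Proposition \ref{proposition: property}.
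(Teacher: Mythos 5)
There are genuine gaps in your plan, both structural and arithmetic. First, the decomposition of $C_{pm}$ as $\vec{C}_p\otimes_h\mathcal{T}^q_\sigma$ with $q=1$ cannot work: in the $\otimes_h$-product the number of arcs is $\sum_{e\in E(D)}|E(h(e))|$, so if each $h(e)$ is a single labeled arc the product has $p$ arcs on $pm$ vertices, which is nowhere near a cycle of length $pm$. The second factor must be a strong orientation of $C_m$ itself, i.e.\ $h:E(\vec{C}_p)\rightarrow\{\vec{C}_m,\overleftarrow{C}_m\}$ with $q=m$, and the fact that a suitable choice of $h$ makes the product isomorphic to $\vec{C}_{pm}$ is exactly the content of Theorem \ref{cicleALI}; it is not something you can arrange with single arcs, and it is the one structural ingredient your outline is missing. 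Second, once this is corrected, the new construction computes the \emph{wrong part}: taking $D=\vec{C}_p\in\mathcal{S}_p^{(p+3)/2}$ (so $n=p$, $k=(p+3)/2$) and $C_m\in\mathcal{T}^m_{\mathrm{val}(f)}$ (so $p+q=2m$, $\sigma=\mathrm{val}(f)$ in the notation of Lemma \ref{lema: induced_valence_product_now}) gives valence $2m\bigl((p+3)/2+p-3\bigr)+\mathrm{val}(f)=3(p-1)m+\mathrm{val}(f)$, which is part (ii), not part (i).

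Third, your route to the remaining valence via Proposition \ref{proposition: property} fails arithmetically: the complementary labeling of a valence-$v$ edge-magic labeling of $C_{pm}$ has valence $3(2pm+1)-v$, and $3(2pm+1)-\bigl(3(p-1)m+\mathrm{val}(f)\bigr)=3(p+1)m+3-\mathrm{val}(f)$, which is not $p\,\mathrm{val}(f)-3(p-1)/2$ in general (e.g.\ $p=m=3$, $\mathrm{val}(f)=9$ gives $30$ versus $24$). Part (i) is obtained in the paper by the \emph{old} construction with the factors in the opposite order: $D=\vec{C}_m$ carrying the edge-magic labeling $f$ as first factor and the super edge-magic labeled $C_p\in\mathcal{S}_p^{(p+3)/2}$ as second factor, using Corollary \ref{cicleALI_coro} for the structure and Lemma \ref{lemma: valenceinducedproduct} for the valence $p(\mathrm{val}(f)-3)+(p+3)/2+p=p\,\mathrm{val}(f)-3(p-1)/2$. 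This is also where the hypothesis ``$m$ odd or $m\ge p$'' enters, which is why the paper only recovers the statement with that technical condition added (Proposition \ref{propo: ALMOST_Labels from McQuilian}); neither construction by itself proves McQuillan's proposition in full generality.
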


The following structural results will be useful to prove that Proposition \ref{propo: Labels from McQuilian} can also be obtained by means of the $\otimes_h$-product. We denote by $\overrightarrow{C_n}$ and by $\overleftarrow{C_n}$ the two possible strong orientations of the cycle $C_n$, where the vertices of $C_n$ are the elements of the set $\{i\}_{i=1}^n$. It is well known that $$\overrightarrow{C_m}\otimes _h \{\overrightarrow{C_n},\overleftarrow{C_n}\}=\rm{gcd}(m,n)\overrightarrow{C}_{\rm{lcm}[m,n]}.$$

\begin{theorem}\cite{AMR} \label{cicleALI}
Let $m, n\in \mathbb{N}$ and consider the product $\overrightarrow{C}_m \otimes_h \{{\overrightarrow{C}_n, \overleftarrow{C}_n }\}$ where $h:
E(\overrightarrow{C}_m)\longrightarrow \{{\overrightarrow{C}_n, \overleftarrow{C}_n }\}$. Let $g$ be a generator of a cyclic subgroup of
$\mathbb{Z}_n$, namely $\langle{g}\rangle$, such that $|\langle{g}\rangle|=k$. Also let $N_g(h^{-})<m$ be a natural number that satisfies the congruence relation
$ m-2N_g(h^{-})\equiv g\ (mod\,\ n).$

 If the function $h$ assigns   $\overleftarrow {C}_n$ to exactly $N_g(h^{-})$ arcs of $\overrightarrow {C}_m$ then the product
$$\overrightarrow{C}_m \otimes_h \{{\overrightarrow{C}_n, \overleftarrow{C}_n }\}
$$
consists of exactly $n/k$ disjoint copies of a strongly oriented cycle $\overrightarrow {C}_{mk}$. In particular if gcd$(g,n)=1$, then
$\langle{g}\rangle=\mathbb{Z}_n$ and if the function $h$ assigns   $\overleftarrow {C}_n$ to exactly $N_g(h^{-})$ arcs of $\overrightarrow {C}_m$
then $$ \overrightarrow{C}_m \otimes_h \{{\overrightarrow{C}_n, \overleftarrow{C}_n }\}\cong\overrightarrow{C}_{mn}.$$
\end{theorem}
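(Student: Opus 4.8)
The plan is to exploit the fact that $\overrightarrow{C}_m$ and both $\overrightarrow{C}_n$ and $\overleftarrow{C}_n$ are $1$-regular digraphs (each vertex has in-degree and out-degree exactly one), so that by the definition of the $\otimes_h$-product the product $\overrightarrow{C}_m \otimes_h \{\overrightarrow{C}_n, \overleftarrow{C}_n\}$ is again $1$-regular, and hence decomposes into a disjoint union of strongly oriented cycles. The entire task then reduces to computing the common length of these cycles together with their number.

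First I would fix coordinates: write $V(\overrightarrow{C}_m) = \{1, \ldots, m\}$ with arcs $(i, i+1)$ (indices read modulo $m$), and identify $V$ with $\mathbb{Z}_n = \{1, \ldots, n\}$ so that $\overrightarrow{C}_n$ carries the arcs $(a, a+1)$ and $\overleftarrow{C}_n$ carries the arcs $(a, a-1)$. With this convention, the unique out-arc of a vertex $(i, a)$ leads to $(i+1, a + \varepsilon_i)$, where $\varepsilon_i = +1$ if $h$ assigns $\overrightarrow{C}_n$ to the arc $(i, i+1)$ and $\varepsilon_i = -1$ if it assigns $\overleftarrow{C}_n$. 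Tracing the walk from $(i,a)$, after one full turn around $\overrightarrow{C}_m$, i.e. after exactly $m$ steps, the first coordinate returns to its initial value and the second coordinate is shifted by $\sum_{i=1}^{m} \varepsilon_i$. Since $h$ assigns $\overleftarrow{C}_n$ to exactly $N_g(h^-)$ arcs, this net shift equals $(m - N_g(h^-)) - N_g(h^-) = m - 2N_g(h^-) \equiv g \pmod n$.

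Next I would pin down the cycle length. The first coordinate of the walk returns to its starting value precisely at those steps that are multiples of $m$, and after $\ell$ complete turns the vertex reached is $(i, a + \ell g)$ in $\mathbb{Z}_n$. Hence the walk closes up, returning to $(i,a)$, exactly when $\ell g \equiv 0 \pmod n$, that is, when $\ell$ is a multiple of the order of $g$ in $\mathbb{Z}_n$, which is $k = |\langle g \rangle|$. The minimal such $\ell$ is $k$, so every cycle has length $mk$ and is a strongly oriented $\overrightarrow{C}_{mk}$. Counting by vertices, the product has $mn$ vertices partitioned into cycles of equal length $mk$, so their number is exactly $mn/(mk) = n/k$.

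The main thing to be careful about is the assertion that all cycles share the same length $mk$ and, in particular, that the minimal return period is exactly $mk$ rather than a proper divisor of it. Here I would stress two independent points: the displacement $g$ per turn does not depend on the starting second coordinate $a$ (the sequence $\varepsilon_1, \ldots, \varepsilon_m$ is fixed by $h$), so the length is uniform across starting vertices; and the first coordinate forces returns to occur only at multiples of $m$ steps, so that the number of required turns is governed solely by the order of the translation-by-$g$ map on $\mathbb{Z}_n$, namely $k$. The special case is then immediate: if $\gcd(g,n) = 1$ then $\langle g \rangle = \mathbb{Z}_n$, so $k = n$, the cycle length is $mn$, and $n/k = 1$, yielding a single copy $\overrightarrow{C}_{mn}$.
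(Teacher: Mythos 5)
Your argument is correct and complete: the $1$-regularity of the product forces a disjoint union of strongly oriented cycles, the net displacement $m-2N_g(h^-)\equiv g\pmod n$ per full turn around $\overrightarrow{C}_m$ is independent of the starting vertex, and the first coordinate confines returns to multiples of $m$ steps, so the minimal period is $mk$ and the count $n/k$ follows by dividing $mn$ vertices by the common cycle length. Note that the paper itself gives no proof of this statement --- it is quoted from the reference [AMR] --- so there is nothing internal to compare against; your proof is the natural orbit-tracing argument one would expect for this result.
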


\begin{corollary}\label{cicleALI_coro}\cite{PEM_LMR}
Let $n\ge 3$ be an odd integer  and suppose that $m\ge 3$ is an integer such that either $m$ is odd or $m\ge n$. Then there exists a function $h:E(\overrightarrow{C_m})\rightarrow \{\overrightarrow{C_n}, \overleftarrow{C_n}\}$ such that
$$\overrightarrow{C_m}\otimes_h\{\overrightarrow{C_n},\overleftarrow{C_n}\}\cong \overrightarrow{C_{mn}}.$$
\end{corollary}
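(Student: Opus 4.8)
The plan is to apply Theorem~\ref{cicleALI} with the choice of generator $g=1$ of $\mathbb{Z}_n$. Since $\gcd(1,n)=1$, we have $\langle 1\rangle=\mathbb{Z}_n$ and hence $k=|\langle 1\rangle|=n$; the theorem then produces $n/k=1$ disjoint copy of the strongly oriented cycle $\overrightarrow{C}_{mk}=\overrightarrow{C}_{mn}$, which is exactly the conclusion we want (this is the ``in particular'' clause of Theorem~\ref{cicleALI}). Thus everything reduces to checking that the hypothesis of Theorem~\ref{cicleALI} can be met for $g=1$: I must exhibit a natural number $N=N_{1}(h^{-})<m$ satisfying the congruence $m-2N\equiv 1\ (\mathrm{mod}\ n)$, and then take $h$ to be any function assigning $\overleftarrow{C}_n$ to exactly $N$ of the $m$ arcs of $\overrightarrow{C}_m$ (there are enough arcs, since $N<m$).

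First I would record that, because $n$ is odd, $\gcd(2,n)=1$, so $2$ is invertible modulo $n$ and the equivalent congruence $2N\equiv m-1\ (\mathrm{mod}\ n)$ always admits a solution. The remaining task is to produce a solution whose least representative is small enough to serve as a valid $N_{1}(h^{-})$, i.e.\ strictly less than $m$; here I would split on the parity of $m$.

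If $m$ is odd, then $N=(m-1)/2$ is an integer with $1\le N<m$ and satisfies $m-2N=1$, so the congruence holds on the nose. If instead $m$ is even, then the hypothesis of the corollary forces $m\ge n$; I would take $N_{0}$ to be the representative in $\{0,1,\dots,n-1\}$ of $2^{-1}(m-1)\ (\mathrm{mod}\ n)$, so that $N_{0}\le n-1\le m-1<m$. In either situation the required $N$ exists, the function $h$ is defined accordingly, and Theorem~\ref{cicleALI} yields $\overrightarrow{C_m}\otimes_h\{\overrightarrow{C_n},\overleftarrow{C_n}\}\cong\overrightarrow{C_{mn}}$.

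I expect the only genuine subtlety to be the even case. When $m$ is even the integer $m-2N$ is itself even, so it can never equal $1$ exactly; one is forced to work honestly modulo $n$ and to use both facts at once, namely that $n$ is odd (so $2$ is invertible and a solution is found among $0,\dots,n-1$) and that $m\ge n$ (so that this residue is indeed smaller than $m$). This is precisely the place where the two alternatives ``$m$ odd or $m\ge n$'' in the hypothesis are both needed. A minor bookkeeping point, easily disposed of, is to ensure $N_{1}(h^{-})$ is positive if the convention demands it: in the even case $m>n$ holds automatically (as $m$ and $n$ have opposite parity), so $N_{0}$ may be replaced by $N_{0}+n<m$ whenever $N_{0}=0$.
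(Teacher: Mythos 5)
The paper states this corollary as a quoted result from \cite{PEM_LMR} and gives no proof of its own, but your derivation from Theorem~\ref{cicleALI} with $g=1$ is clearly the intended route and is correct: the case split on the parity of $m$, the use of the invertibility of $2$ modulo the odd $n$, and the bound $N_0\le n-1<m$ in the even case (where $m\ge n$ is needed) all check out. Your closing remark about adjusting $N_0=0$ to $N_0+n$ is also sound, since $m$ even and $n$ odd force $m>n$.
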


Now, by combining the previous two results and Lemmas \ref{lemma: valenceinducedproduct} and \ref{lema: induced_valence_product_now}, we obtain the next result, which, except for the technical condition in (i), it is the same result that McQuilian obtained in \cite{McQ09} (see Proposition \ref{propo: Labels from McQuilian}).

\begin{proposition} \label{propo: ALMOST_Labels from McQuilian}
Let $p$ be odd. Assume that $C_m$ has an edge-magic labeling $f$. Then,
\begin{enumerate}
  \item[(i)] $C_{pm}$ has an edge-magic labeling with valence $p(\hbox{val}(f))-3(p-1)/2$, when $m$ is odd or $m\ge p$.
  \item[(ii)] $C_{pm}$ has an edge-magic labeling with valence $3(p-1)m+\hbox{val}(f)$.
\end{enumerate}
\end{proposition}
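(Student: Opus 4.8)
The plan is to realize each cycle $C_{pm}$ as the underlying undirected graph of a product of oriented cycles, placing the odd factor $\overrightarrow{C_p}$ in whichever role makes the appropriate valence formula return the target value, and then simply to read off the valence. Throughout I identify an edge-magic labeling of a $2$-regular graph with one of the two strong orientations of the cycle, which leaves every vertex and edge label—and hence the magic sum—unchanged; in particular $\overrightarrow{C_m},\overleftarrow{C_m}$ both carry the labeling $f$ with magic sum $\mathrm{val}(f)$, while $\overrightarrow{C_p},\overleftarrow{C_p}$ are $1$-regular super edge-magic digraphs lying in $\mathcal{S}_p\subset\mathcal{S}_p^{(p+3)/2}$ because $p$ is odd.

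For part (i) I would take $D=\overrightarrow{C_m}$ with the edge-magic labeling $f$ as the first factor and $\{\overrightarrow{C_p},\overleftarrow{C_p}\}\subset\mathcal{S}_p^{(p+3)/2}$ as the second factor, i.e.\ the construction of Theorem \ref{theorem: spk}. Corollary \ref{cicleALI_coro}, applied with a second factor of odd order $p$ and a first factor of order $m$, provides---exactly under the hypothesis that $m$ is odd or $m\ge p$---a function $h$ with $\overrightarrow{C_m}\otimes_h\{\overrightarrow{C_p},\overleftarrow{C_p}\}\cong\overrightarrow{C_{mp}}$. By Theorem \ref{theorem: spk} the induced labeling $\hat f$ is edge-magic on the underlying graph $C_{mp}$, and Lemma \ref{lemma: valenceinducedproduct} with $k=(p+3)/2$ gives
$$
\mathrm{val}(\hat f)=p(\mathrm{val}(f)-3)+k+p=p\,\mathrm{val}(f)-2p+\tfrac{p+3}{2}=p\,\mathrm{val}(f)-\tfrac{3(p-1)}{2},
$$
the claimed valence; the technical condition is inherited verbatim from Corollary \ref{cicleALI_coro}.

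For part (ii) I would instead use the new construction of Theorem \ref{producte_super_k}, now placing the odd cycle $\overrightarrow{C_p}\in\mathcal{S}_p^{(p+3)/2}$ as the \emph{first} factor and $\{\overrightarrow{C_m},\overleftarrow{C_m}\}\in\mathcal{T}^m_{\mathrm{val}(f)}$ (each with $m$ vertices, $m$ edges and magic sum $\mathrm{val}(f)$) as the second. The key point, which removes any restriction on $m$, is that in Theorem \ref{cicleALI} applied with first factor $\overrightarrow{C_p}$ and a second factor of order $m$ one may choose $g=1$ (so that $\langle g\rangle=\mathbb{Z}_m$ and $\gcd(g,m)=1$) together with $N_g(h^-)=(p-1)/2$, which is a natural number strictly below $p$ precisely because $p$ is odd and which satisfies $p-2N_g(h^-)=1\equiv g\pmod m$ for \emph{every} $m$. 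Hence there is an $h$ with $\overrightarrow{C_p}\otimes_h\{\overrightarrow{C_m},\overleftarrow{C_m}\}\cong\overrightarrow{C_{pm}}$ for all $m$. Theorem \ref{producte_super_k} then makes the induced labeling $\widetilde h$ edge-magic on $C_{pm}$, and Lemma \ref{lema: induced_valence_product_now} with $|V(F)|=m$, $q=m$, $k=(p+3)/2$ and $n=p$ yields
$$
\mathrm{val}(\widetilde h)=(m+m)\Big(\tfrac{p+3}{2}+p-3\Big)+\mathrm{val}(f)=2m\cdot\tfrac{3(p-1)}{2}+\mathrm{val}(f)=3(p-1)m+\mathrm{val}(f),
$$
as required.

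I expect the genuine obstacle to be the isomorphism step: forcing the product of oriented cycles to collapse to a single directed cycle $\overrightarrow{C_{pm}}$ rather than to several disjoint copies. The clean way past it is the observation above---taking the orientation-reversal count to be $(p-1)/2$ pins $g$ to the unit $1$ modulo $m$, valid for any $m$---and it is exactly this freedom, available because in part (ii) the odd factor is the one whose order governs $N_g(h^-)$, that lets part (ii) shed the parity/size condition that part (i) must keep. Everything else is the routine bookkeeping of substituting $k=(p+3)/2$, $q=m$ and $\sigma=\mathrm{val}(f)$ into the two valence formulas, together with the remark that passing to underlying undirected graphs preserves both edge-magicness and the magic sum.
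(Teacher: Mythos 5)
Your proof is correct and follows essentially the same route as the paper: Corollary \ref{cicleALI_coro} combined with Lemma \ref{lemma: valenceinducedproduct} (taking $k=(p+3)/2$) for part (i), and Theorem \ref{cicleALI} combined with Theorem \ref{producte_super_k} and Lemma \ref{lema: induced_valence_product_now} for part (ii). Your explicit choice of $g=1$ and $N_g(h^-)=(p-1)/2$ to guarantee a single cycle $\overrightarrow{C_{pm}}$ in part (ii) merely fills in a detail the paper leaves implicit.
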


\begin{proof}
(i) By Corollary \ref{cicleALI_coro}, there exists a function $h:E(\overrightarrow{C_m})\rightarrow \{\overrightarrow{C_p}, \overleftarrow{C_p}\}$ such that
$\overrightarrow{C_m}\otimes_h\{\overrightarrow{C_p},\overleftarrow{C_p}\}\cong \overrightarrow{C_{pm}}.$ Assume that each vertex of $C_p$ is identified by the label assigned to it by a super edge-magic labeling. Then, by Lemma \ref{lemma: valenceinducedproduct}, the induced labeling of the product $\overrightarrow{C_{pm}}$ has valence:
$\hbox{val}({\hat f})=p(\hbox{val}(f)-3)+(p+3)/2+p$, that is, $p(\hbox{val}(f))-3(p-1)/2$.

(ii) Similarly, By Theorem \ref{cicleALI}, there exists a function $h:E(\overrightarrow{C_p})\rightarrow \{\overrightarrow{C_m}, \overleftarrow{C_m}\}$ such that
$\overrightarrow{C_p}\otimes_h\{\overrightarrow{C_p},\overleftarrow{C_p}\}\cong \overrightarrow{C_{pm}}.$

Assume that each vertex of $C_p$ is identified by the label assigned to it by a super edge-magic labeling and  each vertex of $C_m$ is identified by the label assigned to it by $f$. Then, by Lemma \ref{lema: induced_valence_product_now}, the induced labeling of the product $\overrightarrow{C_{pm}}$ has valence:
$\hbox{val}({\tilde{f}})=2m((p+3)/2+p-3)+\hbox{val}(f)$, that is, $3(p-1)m+\hbox{val}(f)$. Thus, the result holds.
\end{proof}

\noindent {\bf Acknowledgements}
The research conducted in this document by the first author has been supported by the Spanish Research Council under project
MTM2014-60127-P and symbolically by the Catalan Research Council
under grant 2014SGR1147.

\end{document}